\documentclass[letterpaper, 10 pt, conference]{ieeeconf}  
\IEEEoverridecommandlockouts                                                                    
\overrideIEEEmargins                                     
\pdfoutput=1

\usepackage{amsthm}

\usepackage[utf8]{inputenc}
\usepackage{graphicx}
\usepackage{amsmath} 
\usepackage{amsfonts}
\usepackage{float}
\usepackage{amssymb}
\usepackage{xcolor,soul}
\usepackage{hyperref}
\usepackage{mathtools}
\usepackage{nccmath}
\usepackage[noadjust]{cite}

\theoremstyle{plain}
	\newtheorem{theorem}{Theorem}
	\newtheorem{corollary}{Corollary}
	\newtheorem{lemma}{Lemma}
	
	\newtheorem{assumption}{Assumption}
\theoremstyle{definition}
	\newtheorem{definition}{Definition}

\title{\LARGE \bf Data-Driven Internal Model Control of Second-Order Discrete Volterra Systems}

\author{Juan G. Rueda-Escobedo$^{1}$ and Johannes Schiffer$^{1}$
\thanks{$^{1}$J.~G.~Rueda-Escobedo and J.~Schiffer are with the Fachgebiet Regelungssysteme und Netzleittechnik, Brandenburgische Technische Universität Cottbus - Senftenberg {\tt\small\{ruedaesc,schiffer\}@b-tu.de}.}%
}

\begin{document}
	
\maketitle
\thispagestyle{empty}
\pagestyle{empty}

\begin{abstract}
The increase in system complexity paired with a growing availability of operational data
has motivated a change in the traditional control design paradigm. Instead of modeling the system by first principles and then proceeding with a (model-based) control design, the data-driven control paradigm proposes to directly characterize the controller from data. By exploiting a fundamental result of Willems and collaborators, this approach has been successfully applied to linear systems, yielding data-based formulas for many classical linear controllers.
In the present paper, the data-driven approach is extended to a class of nonlinear systems, namely second-order discrete Volterra systems. Two main contributions are made
for this class of systems. At first, we show that - under a necessary and sufficient condition on the input data excitation - a data-based system representation can be derived from input-output data and used to replace an explicit system model.
%
That is, the fundamental result of Willems {\em et al.} is extended to this class of systems. Subsequently a data-driven internal model control formula for output-tracking is derived. The approach is illustrated via two simulation examples.
\end{abstract}

\section{INTRODUCTION}
	
	The steady drive for digitization of engineering processes has greatly increased the amount of available system input-output data 
	\cite{HouWang2013,YinDingXie2014,LamAnnEngel2017}. 
	At the same time, the system complexity has also increased \cite{HouWang2013,LamAnnEngel2017}, hindering the model derivation from first principles. This has motivated a change in the manner in which the control design is approached \cite{HouWang2013}. Instead of modeling the system by using first principles and then proceeding to the control design, the new paradigm is to take advantage of the available data to directly characterize a controller, whose structure is typically assumed known. This approach is referred to as \emph{data-driven control} \cite{DataDriven2012,HouWang2013}. 
	
	
	
	However, in order to provide standard closed-loop performance guarantees, such as stability and robustness, it is still necessary to assume a certain structure of the system to be controlled. 
	Clearly, a natural first approach is to assume that the underlying system dynamics are linear. In this respect, data-driven results for linear quadratic tracking \cite{MarkRap2007,Markovsky08}, dynamical feedback \cite{ParkIkeda2009}, state-feedback and optimal control \cite{PangBian2018,Persis2019,Tu2018,Rotulo2019,Berberich2019,Dean2019} as well as model predictive control \cite{Coulson19,BerKohMull2019} have been developed. The collection of these contributions practically cover a wide range of classical design techniques for linear systems. Therefore, to further progress in the consolidation of the data-driven control theory, it is necessary to take a step forward and start developing methods allowing to deal with nonlinear systems.
	
	This motivates the present note, which is devoted to the extension of the data-driven theory to nonlinear Discrete Volterra Systems (DVSs).
	DVSs haven been classically used to approximate the behavior of nonlinear systems and they play an important role in nonlinear system identification theory \cite{Janczak2004,Ogunfu2007,SchmNiag2014}. Many classical nonlinear systems, like Hammerstein and Wiener systems with smooth nonlinearities, can be described by DVSs \cite[Sec.~2]{Doyle2002}. Likewise, under mild assumptions on the output nonlinearity, Lur\'{e} type systems can be represented by DVSs \cite[Sec.~2]{Doyle2002}. Additionally, multilayer neural networks can be modeled as the interconnection of several Wiener systems and can thus be approximated by high-order DVSs \cite{GovRam1990,DavGas1991,MarmZhao1997,Jancz2009}. In the area of process control, DVSs have shown to be of great usefulness for modeling and control design \cite{ManerDoyle1996,LingRiv1998,VarAll2004,MedNie2010} and also be employed to represent a class of communication systems \cite{AntHesSil2012}.	
	Very recently, a trajectory-based approach to represent Hammerstein and Wiener systems has been proposed in the context of data-driven control \cite{BerAll2019} with future applications in model predictive control. Hence, there already is an interest in extending the data-driven theory to systems contained in the class of DVSs. 
	
	
	An outstanding subclass within the broad class of DVSs are second-order DVSs. This (sub)class of systems has a strong connection with the bispectrum \cite{NikPet1993}, which helps to characterize stochastic signals beyond Gaussian sequences, 
	and has applications, for example, in amplitude and frequency modulation \cite{MargKaiQua1993}. Another important application for second-order DVSs is the approximation of the input-output response of bilinear systems when using explicit Runge-Kutta formulas for the discretization \cite{ManerDoyle1996},\cite[Sec.~4.7]{Doyle2002}. Finally, working with this (sub)class of systems represents a ``first step beyond linearity'' while keeping a reasonable dimensionality and number of parameters \cite[Sec.~3]{Doyle2002}.
	
	Many of the abovementioned results on data-driven control for linear systems exploit a fundamental lemma derived by Willems and collaborators within the behavorial systems framework \cite{Willems2005}. The importance of Willems {\em et al.}'s result in a data-driven context is that it establishes a necessary and sufficient condition on the required excitation of the input data of a linear system, such that all its possible trajectories can be represented solely by means of the system's input-output data. 
	Building upon this fact, existing data-driven control approaches seek to directly parametrize the controller through data, see e.g. \cite{Persis2019}. 	
	As a consequence, in order to extend the data-driven control approach from linear to nonlinear systems similar tools are required.
	
	In this context, the present work contains the following two main contributions for data-driven control of DVSs:
	
	\begin{enumerate}
		\item	A data-based representation theory for second-order DVSs is developed, hence extending the fundamental result for linear systems \cite[Theo.~1]{Willems2005}, \cite[Lem.~2]{Persis2019} to this class of nonlinear systems. More precisely, under a necessary and sufficient condition on the required excitation of the input data, our result enables the replacement of an explicit system model by input-output data.
		\item A data-driven internal model control (IMC) formula for output-tracking in second-order DVSs is given. In general, even if the system dynamics are known, designing an output-tracking control for DVSs is challenging due to their nonlinear characteristics\footnote{DVSs are inherently bounded-input-bounded-output (BIBO) stable systems. Therefore, internal stability is guaranteed and thus only output-tracking is relevant for control \cite{Doyle1995,LingRiv1998,Doyle2002}.}. A popular class of controllers that can achieve the desired output tracking are IMCs \cite{Doyle1995,LingRiv1998,Doyle2002}. 
		Thus, our second main contribution is to derive an IMC formula, which is directly parametrized through data. This is accomplished by using the data-based representation results from 1).
	\end{enumerate}

	
	The paper organization is as follows. In Section \ref{Sec:Preliminaries} the notation used along the note and the class of systems under study are introduced. In Section \ref{Sec:DataBasedRep} the data-based representation theory for second-order DVSs is presented. A data-driven IMC formula for output-tracking in second-order DVSs are developed in Section \ref{Sec:DataDrivenCon}. Finally, in Section~\ref{Sec:Simulation}, the application of the methods developed in Section \ref{Sec:DataBasedRep} and Section \ref{Sec:DataDrivenCon} is illustrated via numerical simulations by controlling an ideal DVS and a bilinear system. 

\section{PRELIMINARIES}
\label{Sec:Preliminaries}
	\subsection{Notation}
		Along the note, $\mathbb{Z}$ represents the set of integer numbers and $\mathbb{R}$ the set of real numbers. Let $\mathbb{F}$ be either $\mathbb{Z}$ or $\mathbb{R}$. Then $\mathbb{F}_{> 0}$ ($\mathbb{F}_{\geq 0}$) denotes the set of all elements of $\mathbb{F}$ greater than (or equal to) zero. 
		$\mathbf{I}_n\in\mathbb{R}^{n\times n}$, with $n\in\mathbb{Z}_{> 0}$, denotes the $n\times n$ identity matrix. 
		The Kronecker product of two matrices $A\in\mathbb{R}^{n\times m}$ and $B\in\mathbb{R}^{r\times s}$ is denoted by $A\otimes B$.
		
		Let $A\in\mathbb{R}^{n\times n}$ be a symmetric matrix. We denote by $\mathrm{vech}(A)\in\mathbb{R}^{\frac{n}{2}(n+1)}$ the column vector containing the entries in the lower triangular part of $A$, i.e.,
		\begin{align*}
		\mathrm{vech}(A)=\left[\begin{array}{c c c c c}a_{(1,1)} & a_{(2,1)} & a_{(2,2)} & \cdots & a_{(n,n)}\end{array}\right]^\top.
		\end{align*}
		Given a signal $u:\mathbb{Z}\to\mathbb{R}^m$, the column vector $u_{[k,k+T]}\in\mathbb{R}^{m\cdot T}$, with $k\in\mathbb{Z}$ and $T\in\mathbb{Z}_{> 0}$, corresponds to
		\begin{align*}
		u_{[k,k+T]}=\left[\begin{array}{c c c c}u^\top(k) & u^\top(k+1) & \cdots & u^\top(k+T-1)\end{array}\right]^\top.
		\end{align*}
		Given a signal $u:\mathbb{Z}\to\mathbb{R}^m$, the Hankel matrix $U_{\{k,L,N\}}\in\mathbb{R}^{(m\cdot L)\times N}$ associated to $u(k)$, with $k\in\mathbb{Z}$, $L\in\mathbb{Z}_{\geq 0}$ and $N\in\mathbb{Z}_{> 0}$ is given by
		\begin{align}
		U&_{\{k,L,N\}}=\nonumber\\
		&\left[\begin{array}{c c c c}u(k) & u(k+1) & \cdots & u(k+N-1)\\ u(k+1) & u(k+1) & \cdots & u(k+N)\\ \vdots & \vdots & \ddots & \vdots\\ u(k+L-1) & u(k+L) & \cdots & u(k+L+N-2)\end{array}\right].\label{Eq:Not01}
		\end{align}
		Consider a vector $u\in\mathbb{R}^{m}$, $u=[u_1\,u_2\,\cdots\,u_m]^\top$. By $u^2\in\mathbb{R}^{\frac{m}{2}(m+1)}$ we denote the vector
		\begin{align}
		u^2=\mathrm{vech}(uu^\top)=\left[\begin{array}{c c c c c}u^2_1 & u_1u_2 & u^2_2 & \cdots & u^2_m\end{array}\right]^\top. \label{Eq:Not02}
		\end{align}
		For a matrix $A\in\mathbb{R}^{n\times m}$, $A^\dagger$ denotes the Moore-Penrose pseudoinverse of $A$. If A has either full-row rank or full-column rank, it follows that \cite[Prop.~6.1.5]{Bernstein2009}
		\begin{align*}
		A^\dagger =\left\{\begin{array}{ c c c}
		(A^\top A)^{-1}A^\top & \mathrm{if} & \mathrm{rank}(A)=m,\\
		A^\top(A A^\top)^{-1} & \mathrm{if} & \mathrm{rank}(A)=n.
		\end{array}\right.
		\end{align*}
		
	\subsection{Second-order discrete Volterra systems (DVSs)}
		In this note, we focus on data-driven control for \emph{second order $M$-dimensional discrete Volterra systems (DVSs)}. These systems are described by the input-output equation \cite{Rugh2002,Doyle2002}
		\begin{align}
		y(k)=\sum^{M}_{i=0}&\alpha_{i+1}u(k-i)\nonumber\\
		&+\sum^{M}_{i=0}\sum^{M}_{j=0}\beta_{(i+1,j+1)}u(k-i)u(k-j),\label{Eq:VolterraEq01}
		\end{align}
		with $k\in\mathbb{Z}$, $M\in\mathbb{Z}_{> 0}$ the memory length of the system, $u(k)\in\mathbb{R}$ the system input signal, $y(k)\in\mathbb{R}$ the system output signal and $\alpha_{i}\in\mathbb{R}$ and $\beta_{(i,j)}\in\mathbb{R}$ the system parameters, with $i,j=\{1,\,2,\cdots,M+1\}$. 
		Following standard practice \cite[Sec.~3.1]{Doyle2002}, we assume symmetricity of the system \emph{kernels} from which it follows that $\beta_{i,j}=\beta_{j,i}$.
		
		The system \eqref{Eq:VolterraEq01} can be understood as the combination of two operators applied to the input signal $u(k)$. Denote these operators by $\mathbf{P}_1$ and $\mathbf{P}_2$. Then, the system \eqref{Eq:VolterraEq01} can be equivalently described as
		\begin{align}
		y(k)=\mathbf{P}_1\big(u(k)\big)+\mathbf{P}_2\big(u(k)\big),\label{Eq:VolterraEq02}
		\end{align}
		with
		\begin{equation}
		\begin{split}
		\mathbf{P}_1\big(u(k)\big)&=\theta^\top_1\mu(k),\\
		\mathbf{P}_2\big(u(k)\big)&=\theta^\top_2\mu^2(k),\\
		\mu(k)&=\left[\begin{array}{c c c c}u(k) & u(k-1) & \cdots & u(k-M)\end{array}\right]^\top,\\
		\theta_1&=\left[\begin{array}{c c c c}\alpha_1 & \alpha_2 & \cdots & \alpha_{M+1}\end{array}\right]^\top,\\
		\theta_2&=\left[\begin{array}{c c c c}\beta_{(1,1)} & 2\beta_{(1,2)} & \cdots & \beta_{(M+1,M+1)}\end{array}\right]^\top,
		\end{split}\label{Eq:OpDef}
		\end{equation}
		where the vector $\mu(k)$ contains the input $u(k)$ and its first $M$ delays and $\mu^2(k)$ follows the convention introduced in \eqref{Eq:Not02}. As can be seen from \eqref{Eq:OpDef}, $\mathbf{P}_1$ is linear with respect to the input $u(k)$ while $\mathbf{P}_2$ it is not. Furthermore, $\mathbf{P}_1$ is a causal and stable operator.
		

\section{DATA-BASED SYSTEM REPRESENTATION OF DVSs}
\label{Sec:DataBasedRep}
	In this section we provide an input-output data-based representation of the second-order DVS \eqref{Eq:VolterraEq02}. The proofs of all results contained in this section are given in the Appendix. 
	
	For the presentation of the results and in analogy to the linear case, see e.g. \cite[Def.~1]{Persis2019}, it is convenient to introduce a persistence of excitation concept \emph{ad hoc} to the class of systems under study, i.e., the DVS \eqref{Eq:VolterraEq02}. To this end, consider an observed input sequence $u_{ob,[-M,T]}$ and its associated sequences $\mu_{ob,[0,T]}$ and $\mu^2_{ob,[0,T]}$ where, for $k\in[0,T-1]$, their elements have to be understood as in \eqref{Eq:OpDef} and \eqref{Eq:Not02}. Consider the Hankel matrices corresponding to the associated sequences $\mu_{ob}(k)$ and $\mu^2_{ob}(k)$, i.e., 
	\begin{align}
	M&_{ob,\{0,L,T-L+1\}}=\nonumber\\
	&\left[\begin{array}{c c c c}	\mu_{ob}(0) & \mu_{ob}(1) & \cdots & \mu_{ob}(T-L)\\ 
	\mu_{ob}(1) & \mu_{ob}(2) & \cdots & \mu_{ob}(T-L+1)\\
	\vdots & \vdots & \ddots & \vdots\\
	\mu_{ob}(L-1) & \mu_{ob}(L) & \cdots & \mu_{ob}(T-1)\end{array}\right],\label{Eq:HankelM}\\
	M&^2_{ob,\{0,L,T-L+1\}}=\nonumber\\
	&\left[\begin{array}{c c c c}	\mu^2_{ob}(0) & \mu^2_{ob}(1) & \cdots & \mu^2_{ob}(T-L)\\ 
	\mu^2_{ob}(1) & \mu^2_{ob}(2) & \cdots & \mu^2_{ob}(T-L+1)\\
	\vdots & \vdots & \ddots & \vdots\\
	\mu^2_{ob}(L-1) & \mu^2_{ob}(L) & \cdots & \mu^2_{ob}(T-1)\end{array}\right],\label{Eq:HankelM2}
	\end{align}
	as well as the block matrix
	\begin{align}
	\mathcal{M}_{ob,\{0,L,T-L+1\}}=\left[\begin{array}{c}M_{ob,\{0,L,T-L+1\}}\\ M^2_{ob,\{0,L,T-L+1\}}\end{array}\right].\label{Eq:HankelMcal}
	\end{align}
	We introduce the following notion, cf. \cite[Def.~1]{Persis2019}.	
	\begin{definition}
		\label{Def:PE}
		Consider the sequence $u_{ob,[-M,T]}$ and associated sequences $\mu_{ob,[0,T]}$ and $\mu^2_{ob,[0,T]}$ built from $u_{ob,[-M,T]}$ following \eqref{Eq:OpDef} and \eqref{Eq:Not02}. The sequence $u_{ob,[-M,T]}$ is persistently exciting of order $L>0$ w.r.t. the system \eqref{Eq:VolterraEq02} if the matrix $\mathcal{M}_{ob,\{0,L,T-L+1\}}$ given in \eqref{Eq:HankelMcal} has full-row rank, i.e., if it has rank equal to $\frac{1}{2}L(M+1)(M+4)$.
	\end{definition}
	
	Now we are in the position of introducing the main data-based representation result of this note, which is a natural counterpart to the representation theorems for linear systems reported in \cite[Lem.~2]{Persis2019}, \cite[Theo.~1]{Willems2005}.
	
	\begin{theorem}[\textbf{Data-based trajectory representation of a second-order DVS}]
		\label{Theo:Representation}
		Consider the system \eqref{Eq:VolterraEq02} and observed input-output signals $u_{ob}(k)$ and $y_{ob}(k)$. Construct the $T$-long sequences $\mu_{ob,[0,T]}$, $\mu^2_{ob,[0,T]}$ and $y_{ob,[0,T]}$ according to \eqref{Eq:OpDef} and \eqref{Eq:Not02}. In addition, follow \eqref{Eq:HankelM} and \eqref{Eq:HankelM2}, to build the associated Hankel matrices $M_{ob,\{0,L,T-L+1\}}$, $M^2_{ob,\{0,L,T-L+1\}}$ and $Y_{ob,\{0,L,T-L+1\}}$, with $T\gg L>0$.
		\begin{enumerate}
			\item[i)]	Iff $u_{ob,[-M,T]}$ is persistently exciting of order $L$ w.r.t. the system \eqref{Eq:VolterraEq02}, then for every $L$-long sequences $\mu_{[0,L]}$, $\mu^2_{[0,L]}$ and $y_{[0,L]}$ associated to the system \eqref{Eq:VolterraEq02}, there exists $g\in\mathbb{R}^{T-L+1},$ such that
			\begin{align}
			\left[\begin{array}{c}\mu_{[0,L]}\\ \mu^2_{[0,L]}\\ y_{[0,L]}\end{array}\right]=\left[\begin{array}{c}M_{ob,\{0,L,T-L+1\}}\\ M^2_{ob,\{0,L,T-L+1\}}\\ Y_{ob,\{0,L,T-L+1\}}\end{array}\right]g. 
			\end{align}
			\item[ii)]	Consider an arbitrary sequence $v:[-M,L-1]\to\mathbb{R}$ and corresponding vectors $\nu(\tau)=[v(\tau)\,v(\tau-1)\,\cdots\,v(\tau-M)]^\top$ and $\nu^2(\tau)$ according to \eqref{Eq:Not02}, for $\tau=\{0,1,\cdots,L-1\}$. Iff $u_{ob,[-M,T]}$ is persistently exciting of order $L$ w.r.t. the system \eqref{Eq:VolterraEq02}, then the $L$-long sequences $\mu_{[0,L]}$, $\mu^2_{[0,L]}$ and $y_{[0,L]}$ generated by
			\begin{align}
			\left[\begin{array}{c}\mu_{[0,L]}\\ \mu^2_{[0,L]}\\ y_{[0,L]}\end{array}\right]=\left[\begin{array}{c}M_{ob,\{0,L,T-L+1\}}\\ M^2_{ob,\{0,L,T-L+1\}}\\ Y_{ob,\{0,L,T-L+1\}}\end{array}\right]g, \label{Eq:SeqGen}
			\end{align}
			with $g$ given by
			\begin{align}
			&g=\,\mathcal{M}^{\dagger}_{ob,\{0,L,T-L+1\}}\left[\begin{array}{c}\nu_{[0,L]}\\ \nu^2_{[0,L]}\end{array}\right]\nonumber\\
			&+\left(\mathbf{I}_{T-L+1}-\mathcal{M}^{\dagger}_{ob,\{0,L,T-L+1\}}\mathcal{M}_{ob,\{0,L,T-L+1\}}\right)w,\label{Eq:Gdefinition}
			\end{align}
			where $\mathcal{M}_{ob,\{0,1,T\}}$ is given in \eqref{Eq:HankelMcal} and $w\in\mathbb{R}^{T-L+1}$ is an arbitrary vector, correspond to trajectories of the system \eqref{Eq:VolterraEq02}.
		\end{enumerate}
	\end{theorem}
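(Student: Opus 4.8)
The plan is to exploit that the DVS \eqref{Eq:VolterraEq02} is \emph{linear in the lifted regressor} $\zeta(k):=[\mu^\top(k)\ (\mu^2(k))^\top]^\top$, since $y(k)=\theta_1^\top\mu(k)+\theta_2^\top\mu^2(k)=[\theta_1^\top\ \theta_2^\top]\zeta(k)$. Abbreviating the data matrices by dropping the index set, i.e. $M_{ob}:=M_{ob,\{0,L,T-L+1\}}$ and likewise $M^2_{ob}$, $Y_{ob}$, $\mathcal{M}_{ob}$, and stacking this output relation over the window $\tau=0,\dots,L-1$ and across all data columns yields the single structural identity
\begin{align*}
Y_{ob}=(\mathbf{I}_L\otimes\theta_1^\top)\,M_{ob}+(\mathbf{I}_L\otimes\theta_2^\top)\,M^2_{ob}=:\Theta\,\mathcal{M}_{ob},
\end{align*}
with $\Theta:=[(\mathbf{I}_L\otimes\theta_1^\top)\ (\mathbf{I}_L\otimes\theta_2^\top)]$. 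Thus the output Hankel matrix is a \emph{fixed} linear image of $\mathcal{M}_{ob}$. This identity is the workhorse of the whole proof: any $g$ that reproduces the lifted data automatically reproduces the output through $\Theta$.

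For the sufficiency in part i), persistence of excitation means $\mathcal{M}_{ob}$ has full-row rank $\tfrac12 L(M+1)(M+4)$, so its columns span the space in which any lifted trajectory $[\mu_{[0,L]}^\top\ (\mu^2_{[0,L]})^\top]^\top$ lives; hence there is a $g$ with $\mathcal{M}_{ob}\,g=[\mu_{[0,L]}^\top\ (\mu^2_{[0,L]})^\top]^\top$. Since $(\mu_{[0,L]},\mu^2_{[0,L]},y_{[0,L]})$ is a system trajectory, $y_{[0,L]}=\Theta[\mu_{[0,L]}^\top\ (\mu^2_{[0,L]})^\top]^\top$, and combining this with $Y_{ob}=\Theta\mathcal{M}_{ob}$ gives $Y_{ob}\,g=y_{[0,L]}$, the claimed representation. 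Part ii) closes the loop by the same mechanism: substituting \eqref{Eq:Gdefinition} and using full-row rank (so $\mathcal{M}_{ob}\mathcal{M}_{ob}^\dagger=\mathbf{I}$ and the projector $\mathbf{I}_{T-L+1}-\mathcal{M}_{ob}^\dagger\mathcal{M}_{ob}$ is annihilated by $\mathcal{M}_{ob}$) yields $\mathcal{M}_{ob}\,g=[\nu_{[0,L]}^\top\ (\nu^2_{[0,L]})^\top]^\top$ for the arbitrary probing input $v$. Hence $\mu_{[0,L]}=\nu_{[0,L]}$, $\mu^2_{[0,L]}=\nu^2_{[0,L]}$ and $y_{[0,L]}=Y_{ob}\,g=\Theta[\nu_{[0,L]}^\top\ (\nu^2_{[0,L]})^\top]^\top$, which is exactly the DVS output driven by $v$; the generated triple is therefore a genuine trajectory.

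The necessity directions need more care and share one technical core. For part i) I would pass to the contrapositive: the representation holds for \emph{every} admissible lifted trajectory only if the column space of $\mathcal{M}_{ob}$ contains all of them, so it suffices to prove that the admissible lifted trajectories already span $\mathbb{R}^{\frac12 L(M+1)(M+4)}$, which then forces full-row rank. The analogous necessity in part ii) follows by contradiction: if $\mathcal{M}_{ob}$ lacks full-row rank one selects an admissible $[\nu_{[0,L]}^\top\ (\nu^2_{[0,L]})^\top]^\top$ outside its column space, for which \eqref{Eq:Gdefinition} returns only the least-squares fit; then $\mathcal{M}_{ob}\,g\neq[\nu_{[0,L]}^\top\ (\nu^2_{[0,L]})^\top]^\top$, the generated quadratic block is no longer the vech-square of the generated linear block, and the triple violates \eqref{Eq:VolterraEq02}.

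The step I expect to be hardest is precisely this spanning claim, and the reason is that the map $v\mapsto[\nu_{[0,L]}^\top\ (\nu^2_{[0,L]})^\top]^\top$ is \emph{nonlinear}: its image is a quadratic variety, not a subspace, so a linear combination of admissible lifted trajectories is generically \emph{not} admissible (its quadratic block need not be the vech-square of its linear block). This is exactly why the exact matching $\mathcal{M}_{ob}\,g=[\nu;\nu^2]$, rather than a least-squares surrogate, is indispensable. To establish the spanning I plan a direct construction: choose finitely many structured probing inputs — impulse-type inputs that activate each linear coordinate in turn and two-impulse inputs that isolate each distinct quadratic cross-term — and verify by a triangular, monomial-by-monomial argument that their lifted trajectories are linearly independent and exhaust the target dimension $\tfrac12 L(M+1)(M+4)$, thereby pinning down both the admissibility of the rank value and its necessity.
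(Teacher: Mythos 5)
Your proof of the two sufficiency (``if'') directions is correct and is essentially the paper's own argument: your identity $Y_{ob}=\Theta\,\mathcal{M}_{ob}$ with $\Theta=[\,\mathbf{I}_L\otimes\theta_1^\top\ \ \mathbf{I}_L\otimes\theta_2^\top\,]$ is the paper's equation \eqref{Eq:DBD01} read across the data columns, the existence of $g$ follows from full row rank exactly as in \eqref{Eq:DBD01rank}--\eqref{Eq:DBD02}, and part ii) is the same substitution of \eqref{Eq:Gdefinition} using $\mathcal{M}_{ob}\mathcal{M}_{ob}^{\dagger}=\mathbf{I}$ together with $\mathcal{M}_{ob}\left(\mathbf{I}-\mathcal{M}_{ob}^{\dagger}\mathcal{M}_{ob}\right)=0$.

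Where you genuinely depart from the paper is in the necessity (``only if'') directions, and there you demand more than the paper delivers: the paper's necessity proof consists of the remarks that without full row rank the existence of $g$ ``cannot be ensured in general'' and that $\mathcal{M}_{ob}\mathcal{M}_{ob}^{\dagger}\neq\mathbf{I}$, so the generated triple ``in general'' is not a trajectory; no admissible trajectory outside the column space is ever exhibited. You correctly isolate the missing ingredient as a spanning claim, and for $L=1$ your plan goes through -- in fact more cleanly than by impulse probing: any vector $[a^\top\ b^\top]$ orthogonal to every admissible lifted vector $[x^\top\ \mathrm{vech}(xx^\top)^\top]^\top$, $x\in\mathbb{R}^{M+1}$, gives a polynomial $a^\top x+b^\top\mathrm{vech}(xx^\top)$ vanishing identically, forcing $a=0$ and $b=0$ by linear independence of the monomials $x_i$, $x_ix_j$. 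However, the spanning claim is \emph{false} for every $L\geq 2$, so that part of your plan would fail: because of the Hankel overlap, the entries of $\mu_{[0,L]}$ repeat (the first entry of $\mu(0)$ and the second entry of $\mu(1)$ are both $v(0)$), so all admissible lifted trajectories lie in the proper subspace cut out by these equality constraints and cannot span $\mathbb{R}^{\frac{1}{2}L(M+1)(M+4)}$. The same repetitions occur among the rows of $M_{ob,\{0,L,T-L+1\}}$ in \eqref{Eq:HankelM}, so $\mathcal{M}_{ob,\{0,L,T-L+1\}}$ never has full row rank for $L\geq 2$ and the excitation condition of Definition \ref{Def:PE} is then unsatisfiable; the equivalence is only meaningful at $L=1$, which is the only case the paper actually uses downstream. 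So either restrict your necessity argument to $L=1$, or restate the rank and spanning conditions relative to the subspace of Hankel-consistent vectors; with that repair your proof is complete, and it closes a hole that the paper's own two-line necessity argument leaves open.
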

	
	The claims in Theorem \ref{Theo:Representation} exhibit two important differences with respect to the related results for linear systems \cite{Willems2005,Persis2019}. First, Theorem \ref{Theo:Representation}, part ii) establishes that not every linear combination of the input-output data contained in the Hankel matrices $M_{ob,\{0,L,T-L+1\}}$, $M^2_{ob,\{0,L,T-L+1\}}$ and $Y_{ob,\{0,L,T-L+1\}}$, actually corresponds to a trajectory $\{\mu_{[0,L]},\mu^2_{[0,L]},y_{[0,L]}\}$ of the system \eqref{Eq:VolterraEq02}. Second, every trajectory $\{\mu_{[0,L]},\mu^2_{[0,L]},y_{[0,L]}\}$ of the system can be represented as a linear combination of the Hankel matrices $M_{ob,\{0,L,T-L+1\}}$, $M^2_{ob,\{0,L,T-L+1\}}$ and $Y_{ob,\{0,L,T-L+1\}}$, a characteristic that is reflected in Theorem \ref{Theo:Representation}, part i). 
	
	
	Now, as a direct consequence of Theorem \ref{Theo:Representation}, and for $L=1$, a result analogous to \cite[Theo.~1]{Persis2019} is presented. It enables the replacement of the model-based representation of the system in \eqref{Eq:VolterraEq01} or \eqref{Eq:VolterraEq02} by data.
	
	\begin{corollary}[\textbf{Data-based representation of a second-order DVS}]
		\label{Cor:DataRepresentation}
		Consider the system \eqref{Eq:VolterraEq02} and observed input-output sequences $u_{ob,[-M,T]}$ and $y_{ob,[0,T]}$. Iff $u_{ob,[-M,T]}$ is persistently exciting of order $L=1$ w.r.t. the system \eqref{Eq:VolterraEq02}, then the system \eqref{Eq:VolterraEq02} can be equivalently represented through data as
		\begin{align}
		\begin{split}
		\mu(k)&=\left[\begin{array}{c c c c}u(k) & u(k-1) & \cdots & u(k-M)\end{array}\right]^\top,\\
		y(k)&=Y_{ob,\{0,1,T\}}\mathcal{M}^{\dagger}_{ob,\{0,1,T\}}\left[\begin{array}{c}\mu(k)\\ \mu^2(k)\end{array}\right],
		\end{split} \label{Eq:VolterraData01}
		\end{align}
		where $\mathcal{M}_{ob,\{0,1,T\}}$ is defined in \eqref{Eq:HankelMcal} and $Y_{ob,\{0,1,T\}}$ is the associated Hankel matrix to $y_{ob,[0,T]}$.
	\end{corollary}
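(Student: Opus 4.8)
The plan is to read the statement off as the $L=1$ instance of Theorem~\ref{Theo:Representation}, part~ii). First I would specialize that part to $L=1$: the auxiliary sequence $v$ is then defined only on $[-M,0]$ and produces a single pair $\nu(0),\nu^2(0)$ (the index $\tau$ ranges over $\{0\}$), the generated sequences collapse to one triplet $\{\mu(0),\mu^2(0),y(0)\}$, and the Hankel matrices carry $N=T-L+1=T$ columns, so $\mathcal{M}_{ob,\{0,L,T-L+1\}}=\mathcal{M}_{ob,\{0,1,T\}}$. Setting the free vector $w=0$ in \eqref{Eq:Gdefinition} then leaves $g=\mathcal{M}^{\dagger}_{ob,\{0,1,T\}}\left[\begin{smallmatrix}\nu(0)\\ \nu^2(0)\end{smallmatrix}\right]$.

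The central step is to use persistency of excitation of order $L=1$, which by Definition~\ref{Def:PE} means that $\mathcal{M}_{ob,\{0,1,T\}}$ has full-row rank; the full-row rank expression for the pseudoinverse recalled in the notation then gives $\mathcal{M}_{ob,\{0,1,T\}}\mathcal{M}^{\dagger}_{ob,\{0,1,T\}}=\mathbf{I}$. Substituting $g$ into \eqref{Eq:SeqGen}, the top blocks yield the exact reconstruction $\left[\begin{smallmatrix}\mu(0)\\ \mu^2(0)\end{smallmatrix}\right]=\mathcal{M}_{ob,\{0,1,T\}}\mathcal{M}^{\dagger}_{ob,\{0,1,T\}}\left[\begin{smallmatrix}\nu(0)\\ \nu^2(0)\end{smallmatrix}\right]=\left[\begin{smallmatrix}\nu(0)\\ \nu^2(0)\end{smallmatrix}\right]$, while the bottom block reads $y(0)=Y_{ob,\{0,1,T\}}\,g=Y_{ob,\{0,1,T\}}\mathcal{M}^{\dagger}_{ob,\{0,1,T\}}\left[\begin{smallmatrix}\mu(0)\\ \mu^2(0)\end{smallmatrix}\right]$. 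Since part~ii) of Theorem~\ref{Theo:Representation} certifies that $\{\mu(0),\mu^2(0),y(0)\}$ is a genuine trajectory of \eqref{Eq:VolterraEq02}, and since $v$ is arbitrary so that the reconstructed $\mu(0)=\nu(0)$ exhausts all of $\mathbb{R}^{M+1}$, this identity holds for every admissible input regressor; invoking time-invariance of \eqref{Eq:VolterraEq02} to shift the index from $0$ to a generic $k$ produces exactly \eqref{Eq:VolterraData01}.

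The converse half of the ``iff'' I would inherit from the necessity direction of Theorem~\ref{Theo:Representation}, part~ii): if $u_{ob,[-M,T]}$ fails to be persistently exciting of order one, then $\mathcal{M}_{ob,\{0,1,T\}}$ is rank deficient, $\mathcal{M}_{ob,\{0,1,T\}}\mathcal{M}^{\dagger}_{ob,\{0,1,T\}}$ is merely the orthogonal projector onto its proper range, and the exactness step $\mathcal{M}_{ob,\{0,1,T\}}\mathcal{M}^{\dagger}_{ob,\{0,1,T\}}\left[\begin{smallmatrix}\mu\\ \mu^2\end{smallmatrix}\right]=\left[\begin{smallmatrix}\mu\\ \mu^2\end{smallmatrix}\right]$ breaks down. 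The point I expect to be the main obstacle lies precisely here: the regressor $\left[\begin{smallmatrix}\mu\\ \mu^2\end{smallmatrix}\right]$ is confined to the nonlinear image of $\mu\mapsto\left[\begin{smallmatrix}\mu\\ \mathrm{vech}(\mu\mu^\top)\end{smallmatrix}\right]$ rather than roaming freely in $\mathbb{R}^{\frac{1}{2}(M+1)(M+4)}$, so before concluding necessity I must check that this image still linearly spans the whole ambient space. Matching the degree-one and degree-two parts of the identity $a^\top\mu+b^\top\mathrm{vech}(\mu\mu^\top)\equiv 0$ forces $a=0$ and $b=0$, which confirms the spanning property; a rank-deficient $\mathcal{M}_{ob,\{0,1,T\}}$ therefore leaves some admissible lifted input outside its range, where \eqref{Eq:VolterraData01} can no longer match the output prescribed by \eqref{Eq:VolterraEq02}, so full-row rank, i.e. persistency of excitation of order $L=1$, is necessary as well.
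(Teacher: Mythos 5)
Your proposal is correct and follows essentially the same route as the paper's proof: specialize Theorem~\ref{Theo:Representation} to $L=1$, choose $g=\mathcal{M}^{\dagger}_{ob,\{0,1,T\}}\left[\begin{smallmatrix}\mu(k)\\ \mu^2(k)\end{smallmatrix}\right]$ (i.e., $\nu=\mu(k)$, $w=0$ in \eqref{Eq:Gdefinition}), and exploit that persistency of excitation gives $\mathcal{M}_{ob,\{0,1,T\}}$ full-row rank so that $\mathcal{M}_{ob,\{0,1,T\}}\mathcal{M}^{\dagger}_{ob,\{0,1,T\}}=\mathbf{I}$, which pins down the input blocks and yields \eqref{Eq:VolterraData01}. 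The only difference is that you additionally spell out the necessity half with an explicit spanning argument for the lifted regressors $\mu\mapsto\left[\begin{smallmatrix}\mu\\ \mathrm{vech}(\mu\mu^\top)\end{smallmatrix}\right]$, a point the paper leaves implicit by inheriting the (equally informal, ``in general'') necessity claims of Theorem~\ref{Theo:Representation}.
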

	
	The two results of this section, Theorem \ref{Theo:Representation} and Corollary~\ref{Cor:DataRepresentation}, are analogous to the classical representation theorems for linear systems. Theorem \ref{Theo:Representation} provides a representation of the input-output trajectories of the system \eqref{Eq:VolterraEq02} and, in this respect, describes the \emph{behavior} of the system. Corollary \ref{Cor:DataRepresentation}, on the other hand, is intended to represent the input-output behavior via data, thus, to replace the system model. It is also important to remark that the excitation condition required in Corollary~\ref{Cor:DataRepresentation} corresponds to classical requirements for the parameter identification in this class of systems \cite[Sec.~4.1]{Doyle2002}.

	For the data-driven control design tackled in the next section, it is convenient to independently represent the linear and nonlinear operators $\mathbf{P}_1$ and $\mathbf{P}_2$, respectively. This is achieved via the following specialization of Corollary \ref{Cor:DataRepresentation}, which can be obtained by splitting $\mathcal{M}_{ob,\{0,1,T\}}$ in \eqref{Eq:VolterraData01} in two separate matrices. 
	\begin{corollary}[\textbf{Data-based representation of $\mathbf{P}_1$ and $\mathbf{P}_2$}]
		\label{Cor:DataRepresentation2}	
		Consider the system \eqref{Eq:VolterraEq02} and observed input-output sequences $u_{ob,[-M,T]}$ and $y_{ob,[0,T]}$. Iff $u_{ob,[-M,T]}$ is persistently exciting of order $L=1$ w.r.t. the system \eqref{Eq:VolterraEq02}, then the operators $\mathbf{P}_1$ and $\mathbf{P}_2$ in \eqref{Eq:VolterraEq02} can be represented through data as
		\begin{align}
		\begin{split}
		\mu(k)&=\left[\begin{array}{c c c c}u(k) & u(k-1) & \cdots & u(k-M)\end{array}\right]^\top,\\
		\mathbf{P}_1\big(u(k)\big)&=P_1\,\mu(k),\\
		\mathbf{P}_2\big(u(k)\big)&=P_2\,\mu^2(k),
		\end{split} \label{Eq:VolterraData02}
		\end{align}
		where the matrices $P_1$ and $P_2$ are defined in \eqref{Eq:OpDataBased}.
	\end{corollary}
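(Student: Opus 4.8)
The plan is to derive Corollary~\ref{Cor:DataRepresentation2} as a block partition of the single data-based map established in Corollary~\ref{Cor:DataRepresentation}, and the whole argument rests on one algebraic identity. First I would note that, by \eqref{Eq:VolterraEq02}--\eqref{Eq:OpDef}, every observed sample satisfies $y_{ob}(k)=\theta_1^\top\mu_{ob}(k)+\theta_2^\top\mu^2_{ob}(k)$. Collecting these identities column-wise for $k=0,\dots,T-1$ and recognizing the resulting row stacks as the Hankel matrices in \eqref{Eq:HankelMcal} with $L=1$ gives
\begin{align}
Y_{ob,\{0,1,T\}}=\left[\begin{array}{c c}\theta_1^\top & \theta_2^\top\end{array}\right]\mathcal{M}_{ob,\{0,1,T\}}.\label{Eq:ProofKey}
\end{align}

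Next I would invoke the excitation hypothesis. For $L=1$, Definition~\ref{Def:PE} asks $\mathcal{M}_{ob,\{0,1,T\}}$ to have full-row rank $\tfrac{1}{2}(M+1)(M+4)$, which is precisely its number of rows, since $(M+1)+\tfrac{1}{2}(M+1)(M+2)=\tfrac{1}{2}(M+1)(M+4)$. Full-row rank allows me to apply the right-inverse formula $\mathcal{M}_{ob,\{0,1,T\}}\mathcal{M}^{\dagger}_{ob,\{0,1,T\}}=\mathbf{I}$ recorded in the Notation subsection. Post-multiplying \eqref{Eq:ProofKey} by $\mathcal{M}^{\dagger}_{ob,\{0,1,T\}}$ then collapses the data onto the true kernel coefficients,
\begin{align}
Y_{ob,\{0,1,T\}}\mathcal{M}^{\dagger}_{ob,\{0,1,T\}}=\left[\begin{array}{c c}\theta_1^\top & \theta_2^\top\end{array}\right].\label{Eq:ProofSplit}
\end{align}

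With $P_1$ and $P_2$ defined in \eqref{Eq:OpDataBased} as the two sub-blocks of this row vector conformable with $\mu(k)\in\mathbb{R}^{M+1}$ and $\mu^2(k)$, respectively, \eqref{Eq:ProofSplit} yields $P_1=\theta_1^\top$ and $P_2=\theta_2^\top$. Substituting back into \eqref{Eq:OpDef} gives $\mathbf{P}_1\big(u(k)\big)=\theta_1^\top\mu(k)=P_1\mu(k)$ and $\mathbf{P}_2\big(u(k)\big)=\theta_2^\top\mu^2(k)=P_2\mu^2(k)$, which is exactly \eqref{Eq:VolterraData02}. The ``only if'' direction is inherited from Corollary~\ref{Cor:DataRepresentation}: if the excitation condition fails, the data cannot reproduce the map even before splitting, so the partition cannot recover it either.

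The step I expect to carry the real content is the identity $\mathcal{M}_{ob,\{0,1,T\}}\mathcal{M}^{\dagger}_{ob,\{0,1,T\}}=\mathbf{I}$. Its role is to guarantee that the partition into a linear block $P_1$ and a quadratic block $P_2$ is \emph{unambiguous}, i.e., that it returns $\theta_1^\top$ and $\theta_2^\top$ separately rather than some mixed linear combination of the kernel coefficients. This clean separation works only because the entries of $\mu(k)$ and $\mu^2(k)$ enter \eqref{Eq:VolterraEq02} as algebraically independent regressors, and because full-row rank forces the pseudoinverse to act as an exact right inverse of $\mathcal{M}_{ob,\{0,1,T\}}$; verifying the row count $\tfrac{1}{2}(M+1)(M+4)$ is what certifies that such full-row rank is achievable for $T$ sufficiently large.
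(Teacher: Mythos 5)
Your core identity is correct and is indeed the heart of the matter: for $L=1$ every observed column satisfies $y_{ob}(k)=\theta_1^\top\mu_{ob}(k)+\theta_2^\top\mu_{ob}^2(k)$, so $Y_{ob,\{0,1,T\}}=[\theta_1^\top\;\;\theta_2^\top]\,\mathcal{M}_{ob,\{0,1,T\}}$, and full-row rank gives $\mathcal{M}_{ob,\{0,1,T\}}\mathcal{M}^{\dagger}_{ob,\{0,1,T\}}=\mathbf{I}$, hence $Y_{ob,\{0,1,T\}}\mathcal{M}^{\dagger}_{ob,\{0,1,T\}}=[\theta_1^\top\;\;\theta_2^\top]$. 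The genuine gap is the step where you declare that ``$P_1$ and $P_2$ defined in \eqref{Eq:OpDataBased}'' are ``the two sub-blocks of this row vector.'' That is not how \eqref{Eq:OpDataBased} defines them: there, $P_1$ and $P_2$ are explicit closed-form expressions built from $M_{ob,\{0,1,T\}}$ and $M^2_{ob,\{0,1,T\}}$ separately, involving their individual pseudoinverses, the projectors $\mathbf{I}-(M^2_{ob,\{0,1,T\}})^{\dagger}M^2_{ob,\{0,1,T\}}$ and $\mathbf{I}-M^{\dagger}_{ob,\{0,1,T\}}M_{ob,\{0,1,T\}}$, and inverse correction factors. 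The corollary asserts that precisely these expressions realize $\mathbf{P}_1$ and $\mathbf{P}_2$; your argument never touches them, so it only establishes the existence of \emph{some} data-based matrices doing the job (the conformable blocks of $Y_{ob,\{0,1,T\}}\mathcal{M}^{\dagger}_{ob,\{0,1,T\}}$), not the statement as written. Since the specific formulas \eqref{Eq:OpDataBased} are what enter \eqref{Eq:y1ob}, Lemma~\ref{Lem:Inverse} and Theorem~\ref{The:main}, this identification cannot be waved away.

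The missing work is exactly what the paper's proof supplies. Partition $\mathcal{M}^{\dagger}_{ob,\{0,1,T\}}=[D_1\;\;D_2]$; then $\mathcal{M}_{ob,\{0,1,T\}}\mathcal{M}^{\dagger}_{ob,\{0,1,T\}}=\mathbf{I}$ unpacks into the four block equations
\begin{align*}
M_{ob,\{0,1,T\}}D_1&=\mathbf{I}_{M+1}, & M^2_{ob,\{0,1,T\}}D_1&=0,\\
M_{ob,\{0,1,T\}}D_2&=0, & M^2_{ob,\{0,1,T\}}D_2&=\mathbf{I}_{\frac{1}{2}(M+1)(M+2)},
\end{align*}
and one must verify that the closed forms \eqref{Eq:D1D2} solve these equations, using $AA^{\dagger}A=A$ together with $M_{ob,\{0,1,T\}}M^{\dagger}_{ob,\{0,1,T\}}=\mathbf{I}$ and $M^2_{ob,\{0,1,T\}}(M^2_{ob,\{0,1,T\}})^{\dagger}=\mathbf{I}$ (both blocks inherit full-row rank from $\mathcal{M}_{ob,\{0,1,T\}}$, which also makes the inverse correction factors well defined). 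With that in hand, your own identity finishes the proof: $P_i=Y_{ob,\{0,1,T\}}D_i$ gives $P_1=\theta_1^\top M_{ob,\{0,1,T\}}D_1+\theta_2^\top M^2_{ob,\{0,1,T\}}D_1=\theta_1^\top$ and likewise $P_2=\theta_2^\top$, which is \eqref{Eq:VolterraData02}. A final subtlety worth noting: right inverses are not unique, so the matrices in \eqref{Eq:D1D2} need not literally coincide with the blocks of $\mathcal{M}^{\dagger}_{ob,\{0,1,T\}}$; what makes both your blocks and the paper's formulas correct is that \emph{any} $[D_1\;\;D_2]$ satisfying the four block equations yields $Y_{ob,\{0,1,T\}}D_1=\theta_1^\top$ and $Y_{ob,\{0,1,T\}}D_2=\theta_2^\top$.
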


\section{DATA-DRIVEN FORMULA FOR INTERNAL MODEL CONTROL OF DVSs}

	\begin{figure*}[t!]
		\begin{small}
			\begin{align}
			\begin{split}
			P_1\!&=\!Y_{ob,\{0,1,T\}}\!\left(\mathbf{I}_{T}-\left(M^2_{ob,\{0,1,T\}}\right)^{\dagger}M^2_{ob,\{0,1,T\}}\right)M^{\dagger}_{ob,\{0,1,T\}}\left(\mathbf{I}_{M+1}\!-\!M_{ob,\{0,1,T\}}\left(M^2_{ob,\{0,1,T\}}\right)^{\dagger}M^2_{ob,\{0,1,T\}}M^{\dagger}_{ob,\{0,1,T\}}\right)^{\!-1},\\
			P_2\!&=\!Y_{ob,\{0,1,T\}}\!\left(\mathbf{I}_{T}-M^{\dagger}_{ob,\{0,1,T\}}M_{ob,\{0,1,T\}}\right)\!\left(M^2_{ob,\{0,1,T\}}\right)^{\dagger}\!\left(\mathbf{I}_{\frac{1}{2}(M+1)(M+2)}\!-\!M^2_{ob,\{0,1,T\}}M^{\dagger}_{ob,\{0,1,T\}}M_{ob,\{0,1,T\}}\left(M^2_{ob,\{0,1,T\}}\right)^{\dagger}\right)^{\!-1}
			\end{split}\label{Eq:OpDataBased}			
			\end{align}			
		\end{small}
	\end{figure*}
	
	\label{Sec:DataDrivenCon}
	\begin{figure}[t!]
		\centering
		\includegraphics[scale=0.6,draft=false]{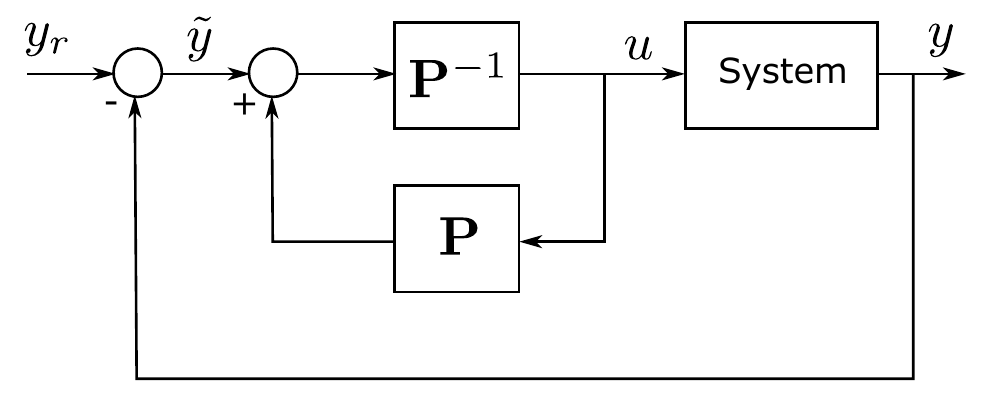}
		\caption{Basic structure of the Internal Model Control (IMC) with reference $y_r$ and tracking error $\tilde y$, based on \cite[Sec.~6]{Doyle2002}.}
		\label{Fig:IMC}
	\end{figure}
	\begin{figure}[t!]
		\centering
		\includegraphics[scale=0.6,draft=false]{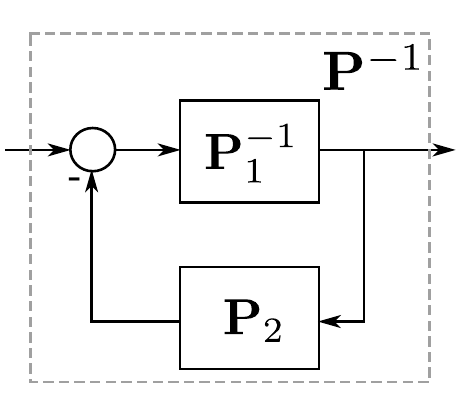}
		\caption{Implementation of the system inverse $\mathbf{P}^{-1}$ using the nonlinear operator $\mathbf{P}_2$ and the inverse of the linear operator $\mathbf{P}_1$, both defined in \eqref{Eq:OpDef}.}
		\label{Fig:PInv}
	\end{figure}
	
Next, the results of the previous section are specialized in order to be applied for output-tracking of second-order DVSs by means of a data-driven IMC algorithm. 
As the name suggests, IMCs are model-based and, essentially, rely on inverting the system input-output map. More precisely, as illustrated in Figure \ref{Fig:IMC}, the fundamental idea of the IMC principle is as follows \cite{Doyle1995,LingRiv1998,Doyle2002}. Given a reference trajectory $y_r:\mathbb{Z} \to \mathbb{R}$, use the system model represented by the operator $\mathbf{P}=\mathbf{P}_1+\mathbf{P}_2$ (see \eqref{Eq:OpDef}) to produce a feed-forward term and add it to the tracking error $\tilde y$ in order to cancel the known part of the system response. Then, the inverse of the system model $\mathbf{P}^{-1}$ is employed to produce the input that, in the absence of uncertainties, will achieve the desired output. 
	
	Hence, given a reference trajectory $y_r:\mathbb{Z} \to \mathbb{R}$ for the system \eqref{Eq:VolterraEq02}, the corresponding IMC algorithm reads \cite{LingRiv1998,Doyle2002}
	\begin{equation}
		u=\mathbf{P}^{-1}\big(\mathbf{P}(u)-y+y_r\big).
		\label{imc2}
	\end{equation} 
	To derive a data-driven IMC formula for the second-order DVS \eqref{Eq:VolterraEq02}, we recall that $\mathbf{P_1}$ in \eqref{Eq:VolterraEq02}  is a linear causal stable operator and make the following two standard assumptions \cite[Sec.~6.2]{Doyle2002}.
	\begin{assumption}
		$\phantom{A}$
		\begin{enumerate}
			\item[i)]	The linear operator $\mathbf{P_1}$ in \eqref{Eq:VolterraEq02} has minimum phase, i.e., all its zeros lay strictly inside the unit circle. 
			\item[ii)]	The inverse of the operator $\mathbf{I}+\mathbf{P}^{-1}_1\mathbf{P}_2$ exists, where $\mathbf{I}$ corresponds to the identity operator.
		\end{enumerate}
		\label{ass1}
	\end{assumption}

	Under Assumption~\ref{ass1}, the inverse operator $\mathbf{P}^{-1}_1$ is stable, and the inverse of the whole system $\mathbf{P}^{-1}$ can be implemented following Figure \ref{Fig:PInv}, see \cite[Sec.~6]{Doyle2002}. It follows that, compared to the general IMC structure, for the implementation of \eqref{imc2} only $\mathbf{P}^{-1}_1$ is needed and not the inverse of the nonlinear operator $\mathbf{P}_2$.
	
	To streamline our main result, the following preliminary lemma is helpful, in which a data-based representation of the inverse operator $\mathbf{P}^{-1}_1$ is provided. 	
	This requires to introduce the part of the observed output data corresponding to $\mathbf{P}_1$, which we denote by $y_{1ob,[0,T]}$. 
	By using Corollary~\ref{Cor:DataRepresentation2}, $y_{1ob,[0,T]}$ can be extracted following any of the next two expressions:
	\begin{align}
	\begin{split}
		y_{1ob,[0,T]}&=\left(\mathbf{I}_{T}\otimes P_1\right)\mu_{ob,[0,T]}\\
			&=y_{ob,[0,T]}-\left(\mathbf{I}_{T}\otimes P_2\right)\mu^2_{ob,[0,T]},
		\end{split}\label{Eq:y1ob}
	\end{align}
	with $P_1$ and $P_2$ as in \eqref{Eq:OpDataBased}. Let $u(k)$ be the output of $\mathbf{P}^{-1}_1$ when applied to $y_1(k)$, i.e., $u(k)=\mathbf{P}^{-1}_1(y_1(k))$. The following result characterizes $\mathbf{P}^{-1}_1$ through data. Its proof is given in the Appendix. 
	
	\begin{lemma}[\textbf{Data-based representation of $\mathbf{P}^{-1}_1$}]
		\label{Lem:Inverse}
		Consider the system \eqref{Eq:VolterraEq02} and observed input-output sequences $u_{ob,[-M,T]}$ and $y_{ob,[0,T]}$. Assume that $u_{ob,[-M,T]}$ is persistently exciting of order $L=1$ w.r.t. to the system \eqref{Eq:VolterraEq02}. Build $y_{1ob,[0,T]}$ following \eqref{Eq:y1ob}. Iff $y_{1ob,[0,T]}$ is persistently exciting of order one in the linear sense, i.e., the Hankel matrix $Y_{1ob,\{0,1,T\}}$ has full-row rank, then the inverse operator $\mathbf{P}^{-1}_1$ can be represented through data as
		\begin{align}
		\begin{split}
		u(k)&=U_{ob,\{0,1,T\}}\left[\begin{array}{c}Y_{1ob,\{0,1,T\}}\\ \hline X_{ob,\{0,1,T\}}\end{array}\right]^{\dagger}\left[\begin{array}{c}y(k)\\ \chi(k)\end{array}\right],\\
		\chi(k)&=\left[\begin{array}{c c c c}u(k-1) & u(k-2) & \cdots & u(k-M)\end{array}\right]^\top,
		\end{split}\label{Eq:InverseSys}
		\end{align}
		where $U_{ob,\{0,1,T\}}$, $X_{ob,\{0,1,T\}}$ and $Y_{1ob,\{0,1,T\}}$ are the Hankel matrices associated to $u_{ob,[0,T]}$, $\chi_{ob,[0,T]}$ and $y_{1ob,[0,T]}$, respectively.
	\end{lemma}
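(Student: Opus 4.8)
The plan is to reduce the statement to a finite-dimensional linear-algebra identity, exploiting that $\mathbf{P}_1$, and hence $\mathbf{P}^{-1}_1$, is a \emph{linear} FIR operator. First I would write the inverse explicitly. From \eqref{Eq:OpDef}, $y_1(k)=\theta^\top_1\mu(k)=\alpha_1 u(k)+\alpha_2 u(k-1)+\cdots+\alpha_{M+1}u(k-M)$, while $\chi(k)$ collects the last $M$ entries of $\mu(k)$. Therefore
\[
\begin{bmatrix} y_1(k)\\ \chi(k)\end{bmatrix}=A\,\mu(k),
\]
where $A\in\mathbb{R}^{(M+1)\times(M+1)}$ has first row $\theta^\top_1$ and last $M$ rows $[\,0_{M\times 1}\;\;\mathbf{I}_M\,]$ (copying the delayed inputs). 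Expanding along the first column gives $\det A=\alpha_1$, so the minimum-phase Assumption~\ref{ass1}.i), under which the leading coefficient $\alpha_1$ cannot vanish, makes $A$ invertible. Reading off $u(k)=e^\top_1\mu(k)$ with $e_1$ the first canonical basis vector then yields the static linear inverse $u(k)=C_{\mathrm{inv}}\begin{bmatrix} y_1(k)\\ \chi(k)\end{bmatrix}$ with $C_{\mathrm{inv}}:=e^\top_1 A^{-1}$; feeding the produced $u(k)$ back into $\chi(k+1)$ realizes the recursion, so this static map faithfully implements the dynamical $\mathbf{P}^{-1}_1$.

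Second, I would lift this identity to the data. Writing every Hankel matrix with the common index set $\{0,1,T\}$, construction \eqref{Eq:y1ob} gives $Y_{1ob}=\theta^\top_1 M_{ob}$, and since $M_{ob}=\begin{bmatrix} U_{ob}\\ X_{ob}\end{bmatrix}$ for $L=1$, applying the first step column-by-column yields the block identity $\begin{bmatrix} Y_{1ob}\\ X_{ob}\end{bmatrix}=A\,M_{ob}$; abbreviate its left-hand side by $Z$. Taking the first row of $M_{ob}=A^{-1}Z$ reproduces the data version of the inverse map, $U_{ob}=C_{\mathrm{inv}}Z$.

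Third, I would turn the Moore--Penrose pseudoinverse into a genuine right inverse. By Definition~\ref{Def:PE}, persistence of excitation of $u_{ob}$ of order $L=1$ makes $\mathcal{M}_{ob}$, and hence its sub-block $M_{ob}$, of full row rank $M+1$; as $A$ is invertible, $Z=A\,M_{ob}$ inherits full row rank $M+1$. This is exactly the data condition stated in the lemma: under the persistence of excitation of $u_{ob}$ the block $X_{ob}$ already has rank $M$, and because $Y_{1ob}=\alpha_1 U_{ob}+[\,\alpha_2\ \cdots\ \alpha_{M+1}\,]X_{ob}$ with $U_{ob}$ lying outside the row space of $X_{ob}$, the row $Y_{1ob}$ is independent of $X_{ob}$ precisely when $\alpha_1\neq0$; conversely $Y_{1ob}$ must itself be of full row rank for $Z$ to be, which gives the ``iff''. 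Full row rank of $Z$ yields $ZZ^{\dagger}=\mathbf{I}_{M+1}$, whence
\[
U_{ob}\,Z^{\dagger}=C_{\mathrm{inv}}\,Z\,Z^{\dagger}=C_{\mathrm{inv}}.
\]
Substituting into the static inverse and identifying the operator input with $y(k)=y_1(k)$ reproduces \eqref{Eq:InverseSys}.

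I expect the main obstacle to be this third step, namely reconciling the \emph{stated} hypothesis (full row rank of the single-row matrix $Y_{1ob}$) with the property actually used (full row rank of the stacked $Z$), which is where the minimum-phase condition $\alpha_1\neq0$ and the persistence of excitation of $u_{ob}$ must be combined. The remaining manipulations are routine; the only point requiring care is that the pseudoinverse must act as a \emph{right} inverse, so full \emph{row} rank --- not full column rank --- is the property to track throughout.
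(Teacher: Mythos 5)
Your proof is correct, and it rests on the same structural idea as the paper's: realize $\mathbf{P}_1^{-1}$ as a linear dynamical system with input $y_1$ and internal state $\chi$ (both arguments need $\alpha_1\neq 0$; the paper divides by $\theta_{1,1}$ silently, while you tie it to Assumption~\ref{ass1}, part i)), and then exploit that the Moore--Penrose pseudoinverse of the stacked Hankel matrix acts as a \emph{right} inverse. The mechanics differ in a way worth noting, however. The paper proceeds Willems-style: it writes the trajectories of the inverse system as in \eqref{Eq:PinvRep00}, invokes persistence of excitation to assert the existence of $g$ in \eqref{Eq:PinvRep01}, selects a particular $g$ via the right inverse, and substitutes back (this substitution step even cites an equation that is missing from the source). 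You instead prove the closed-form identity $U_{ob,\{0,1,T\}}\left[\begin{smallmatrix}Y_{1ob,\{0,1,T\}}\\ X_{ob,\{0,1,T\}}\end{smallmatrix}\right]^{\dagger}=C_{\mathrm{inv}}=e_1^\top A^{-1}$ through the factorization $\left[\begin{smallmatrix}Y_{1ob,\{0,1,T\}}\\ X_{ob,\{0,1,T\}}\end{smallmatrix}\right]=A\,M_{ob,\{0,1,T\}}$ with $\det A=\alpha_1$, so the data expression literally recovers the true coefficient vector of the inverse system --- a coefficient-identification argument rather than a trajectory-inclusion argument. This buys two things the paper's proof leaves implicit. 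First, rigor on the rank condition: what is actually needed is full row rank of the stacked $(M+1)\times T$ matrix, not of the $1\times T$ matrix $Y_{1ob,\{0,1,T\}}$ named in the lemma's hypothesis, and you derive the former from persistence of excitation of $u_{ob}$ combined with $\alpha_1\neq 0$, thereby justifying the existence of $g$ that the paper takes on faith. Second, a sharper reading of the stated ``iff'': under persistence of excitation of $u_{ob}$, rank deficiency of the stacked matrix is equivalent to $\alpha_1=0$, which is precisely the degenerate case in which $\mathbf{P}_1^{-1}$ fails to exist as a causal operator, whereas nonvanishing of $Y_{1ob,\{0,1,T\}}$ alone does not capture this.
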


	It is important to emphasize that $\chi(k)$ in \eqref{Eq:InverseSys} stores the past outputs and plays the role of internal state of the inverse operator $\mathbf{P}^{-1}_1$. We are now in the position to state our main result. Recall the matrices $U_{ob,\{0,1,T\}},$ $Y_{1ob,\{0,1,T\}}$ and $X_{ob,\{0,1,T\}}$ defined in Lemma~\ref{Lem:Inverse} as well as the matrix $P_1$ defined in \eqref{Eq:OpDataBased}.
	
	\begin{theorem}[\textbf{Data-driven IMC formula for output tracking}]
		\label{The:main}
		Consider the system \eqref{Eq:VolterraEq02} with Assumption~\ref{ass1} and observed input-output sequences $u_{ob,[-M,T]}$ and $y_{ob,[0,T]}$. 
		Let $y_r:\mathbb{Z} \to \mathbb{R}$ be a desired reference trajectory.
		Iff $u_{ob,[-M,T]}$ is persistently exciting of order $L=1$ w.r.t. the system \eqref{Eq:VolterraEq02}, then the IMC algorithm in \eqref{imc2} corresponding to the block diagrams in Figures \ref{Fig:IMC} and \ref{Fig:PInv} can be implemented through the observed data as
		\begin{equation}
			\begin{split} 
				u(k)&=U_{ob,\{0,1,T\}}\left[\begin{array}{c}Y_{1ob,\{0,1,T\}}\\ \hline X_{ob,\{0,1,T\}}\end{array}\right]^{\dagger}\left[\begin{array}{c}P_1\mu(k)+\tilde{y}(k)\\ \chi(k)\end{array}\right],\\
				\chi(k)&=\left[\begin{array}{c c c c}u(k-1) & u(k-2) & \cdots & u(k-M)\end{array}\right]^\top,\\
				\mu(k) &= \left[\begin{array}{c c c c}u(k) & u(k-1) & \cdots & u(k-M)\end{array}\right]^\top,\\
				\tilde{y}(k)&=y_{r}(k)-y(k).
			\end{split}
			\label{Eq:ControlSys}
		\end{equation}
	\end{theorem}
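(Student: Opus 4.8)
The plan is to transform the model-based IMC law \eqref{imc2} into the data-based formula \eqref{Eq:ControlSys} through three successive substitutions: first the inverse $\mathbf{P}^{-1}$ is replaced by its implementation in Figure~\ref{Fig:PInv}, then the feed-forward term is rewritten via Corollary~\ref{Cor:DataRepresentation2}, and finally the linear inverse $\mathbf{P}_1^{-1}$ is replaced by its data realization from Lemma~\ref{Lem:Inverse}. The decisive step is an algebraic cancellation of $\mathbf{P}_2$ that eliminates any need for the nonlinear inverse, explaining why only $\mathbf{P}_1^{-1}$ appears in \eqref{Eq:ControlSys}.

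First I would recall, from Figure~\ref{Fig:PInv}, that applying $\mathbf{P}^{-1}$ to a signal $w$ returns the output $u$ fixed by the implicit relation $u=\mathbf{P}_1^{-1}\big(w-\mathbf{P}_2(u)\big)$; under Assumption~\ref{ass1} this loop is well posed, since item~i) renders $\mathbf{P}_1^{-1}$ stable and item~ii) guarantees the existence of $(\mathbf{I}+\mathbf{P}_1^{-1}\mathbf{P}_2)^{-1}$. Writing the IMC argument as $w=\mathbf{P}(u)-y+y_r=\mathbf{P}_1(u)+\mathbf{P}_2(u)+\tilde{y}$ with $\tilde{y}=y_r-y$ and substituting, the two copies of $\mathbf{P}_2(u)$ cancel, so that \eqref{imc2} collapses to $u=\mathbf{P}_1^{-1}\big(\mathbf{P}_1(u)+\tilde{y}\big)$. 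Thus the complete controller is expressed using only the linear operator $\mathbf{P}_1$ and its inverse, exactly the reduction anticipated in the discussion around Figure~\ref{Fig:PInv}.

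It remains to render this expression through the observed data. Because $u_{ob,[-M,T]}$ is persistently exciting of order $L=1$, Corollary~\ref{Cor:DataRepresentation2} permits the substitution $\mathbf{P}_1\big(u(k)\big)=P_1\mu(k)$ with $P_1$ as in \eqref{Eq:OpDataBased}, so the signal entering $\mathbf{P}_1^{-1}$ at instant $k$ equals $P_1\mu(k)+\tilde{y}(k)$. Lemma~\ref{Lem:Inverse} then supplies the data realization of $\mathbf{P}_1^{-1}$ given in \eqref{Eq:InverseSys}, whose arguments are the signal to be inverted and the internal state $\chi(k)$ of past controls. Feeding $P_1\mu(k)+\tilde{y}(k)$ in place of the generic input $y(k)$ of \eqref{Eq:InverseSys}, while keeping $\chi(k)$ as the state, returns precisely \eqref{Eq:ControlSys}. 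Since every manipulation used is an equivalence and Corollary~\ref{Cor:DataRepresentation2} holds if and only if $u_{ob,[-M,T]}$ is persistently exciting of order one, the biconditional asserted in the theorem is inherited directly from it.

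I expect the principal difficulty to lie not in the cancellation itself, which is immediate once $w-\mathbf{P}_2(u)$ is formed, but in justifying that the $u$ delivered by $\mathbf{P}^{-1}$ and the $u$ driving the internal model $\mathbf{P}(u)$ are one and the same signal along the whole trajectory, so that the cancellation is a genuine operator identity rather than a mere steady-state coincidence; this is secured by the unique solvability of the inverse loop guaranteed by part~ii) of Assumption~\ref{ass1}. A secondary point to verify is that the hypothesis of Lemma~\ref{Lem:Inverse}, namely full row rank of $Y_{1ob,\{0,1,T\}}$, is in force, which follows from the minimum-phase property in part~i) of Assumption~\ref{ass1} together with the order-one excitation of $u_{ob,[-M,T]}$.
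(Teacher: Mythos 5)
Your proof follows essentially the same route as the paper's: replace $\mathbf{P}^{-1}$ by its implementation from Figure~\ref{Fig:PInv}, use the cancellation $\mathbf{P}(u)-\mathbf{P}_2(u)=\mathbf{P}_1(u)$ to collapse \eqref{imc2} to $u=\mathbf{P}_1^{-1}\big(\mathbf{P}_1(u)+\tilde{y}\big)$, and then substitute the data-based representations of $\mathbf{P}_1$ and $\mathbf{P}_1^{-1}$ from Corollary~\ref{Cor:DataRepresentation2} and Lemma~\ref{Lem:Inverse}. Your extra verifications---well-posedness of the inverse loop via Assumption~\ref{ass1}, part ii), and the observation that persistent excitation of $u_{ob,[-M,T]}$ together with $\theta_1\neq 0$ (implied by the minimum-phase requirement) delivers the rank hypothesis of Lemma~\ref{Lem:Inverse}---are points the paper's proof leaves implicit, so they strengthen rather than alter the argument.
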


	\begin{proof}
		Consider the block diagram in Figure \ref{Fig:IMC} and replace $\mathbf{P}^{-1}$ by its implementation in Figure \ref{Fig:PInv}. By reducing the block diagram at the summation point and noting that $\mathbf{P}(u)-\mathbf{P}_2(u)=\mathbf{P}_1(u)$, the input $u$ results in
		\begin{align}
			u=\mathbf{P}^{-1}_1\big(\mathbf{P}_1(u)-y+y_r\big)=\mathbf{P}^{-1}_1\big(\mathbf{P}_1(u)+\tilde y\big).
			\label{Eq:control}
		\end{align}
		Thus, for the implementation of \eqref{Eq:control} only the linear operator $\mathbf{P}_1$ and its inverse are needed. The data-based representations of the operators $\mathbf{P}_1$ and $\mathbf{P}^{-1}_1$ are obtained from Corollary \ref{Cor:DataRepresentation2} and Lemma \ref{Lem:Inverse}, respectively. The combination of expressions \eqref{Eq:VolterraData02} and \eqref{Eq:InverseSys} yields \eqref{Eq:ControlSys}, i.e., the equivalent data-driven formula to \eqref{Eq:control}.
	 	\end{proof}	
	
	The standard approach to implement the IMC \eqref{imc2} is to pursue a two-step design procedure: In the first step, the parameters $\alpha_i$ and $\beta_{(i,j)}$ in \eqref{Eq:VolterraEq01} are identified. Then, in the second step the inverse of the linear part is computed using, e.g., the $Z$-transform and the IMC algorithm is implemented.
	In contrast, Theorem~\ref{The:main} follows the spirit of data-driven control \cite{DataDriven2012,HouWang2013,Persis2019} by providing the concise formula \eqref{Eq:ControlSys}, that enables a direct implementation of the IMC \eqref{imc2} from data in a single step.

\section{SIMULATION EXAMPLE}
	\label{Sec:Simulation}
	
	\begin{figure}[t]
		\centering
		\includegraphics[width=0.4\textwidth]{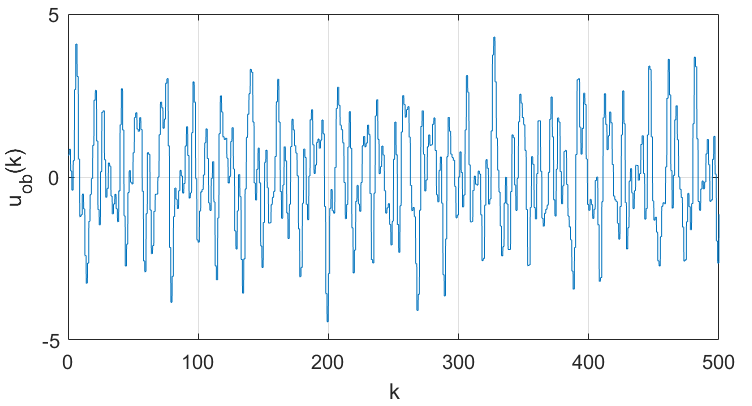}
		\caption{Example 1: Input signal used to characterize the second-order DVS.}\label{Fig:ExcSig01}
	\end{figure}
	
	\begin{figure}[t]
		\centering
		\includegraphics[width=0.4\textwidth]{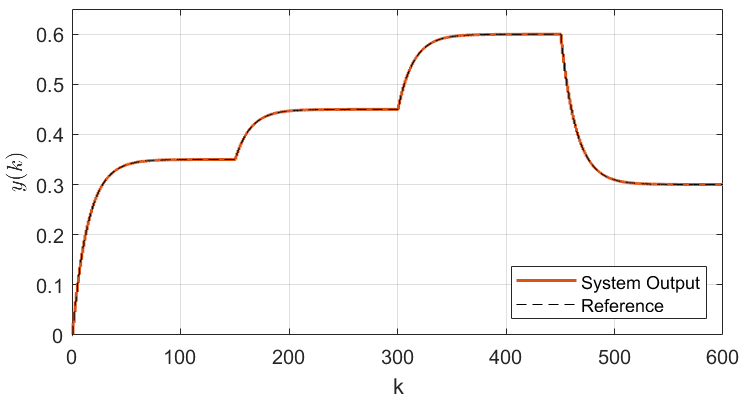}
		\caption{Example 1: By using the proposed data-driven IMC formula \eqref{Eq:ControlSys}, perfect reference tracking of the second-order DVS is achieved.}\label{Fig:Track01}
	\end{figure}
	
	To illustrate the application of the proposed data-driven IMC formula \eqref{Eq:ControlSys}, two scenarios are considered. At first, we investigate the control performance for an ideal second-order DVS of the form \eqref{Eq:VolterraEq02}. In the second example, we then evaluate the robustness of the derived methodology via its application to a continuous-time bilinear system, which does not correspond exactly to a second-order DVS. In addition, the impact of noise on the control performance is investigated. 
	
	\noindent
	\textbf{Example 1.} For the first scenario, a second-order DVS \eqref{Eq:VolterraEq02}, \eqref{Eq:OpDef} of dimension $M=5$ is considered with the parameters
	\begin{align*}
	\theta_1^\top&=\left[\begin{array}{c c c c c c} 4 & 3 & 0.82 & 0.156 & -0.014 & -0.006\end{array}\right],\\
	\theta_2^\top&=\left[\begin{array}{c c c c c c} 0.8147 & 0.9058 & 0.127 & 0.9134 & 0.6324 & 0.0975\end{array}\right.\\
	&\quad\begin{array}{c c c c c c} 0.2785 & 0.5469 & 0.9575 & 0.9649 & 0.1576 & 0.9706\end{array}\\
	&\quad\begin{array}{c c c c c c} 0.9572 & 0.4854 & 0.8003 & 0.1419 & 0.4218 & 0.9157\end{array}\\
	&\quad\left.\begin{array}{c c c} 0.7922 & 0.9595 & 0.6557\end{array}\right].
	\end{align*}
	The parameters are chosen such that the conditions in Assumption~\ref{ass1} are satisfied. The system is excited with a superposition of discrete sinusoidal signals, shown in Figure~\ref{Fig:ExcSig01}. After verifying that the signal is persistently exciting, see Definition~\ref{Def:PE}, the collected input-output data is used to compute the IMC using directly the formula \eqref{Eq:ControlSys}. The output reference for the tracking process is shown in Figure~\ref{Fig:Track01}. As can be seen, perfect reference tracking is achieved. This is possible because the considered system coincides exactly with a second-order DVS and the use of the inverse model in the IMC imposes this behavior.
	
	\noindent
	\textbf{Example 2.} Now, the following bilinear system is considered
	\begin{align}
		\begin{split}
			\dot{x}_1(t)&=x_2(t)+x_2(t)u(t),\\
			\dot{x}_2(t)&=-x_1(t)-4x_2(t)-x_1(t)u(t)+u(t),\\
			y(t)&=x_1(t)-x_2(t).
		\end{split}\label{Eq:BilinearSys}
	\end{align}

	It is known that the response of bilinear systems can be approximated by a DVS, though in general the DVS order is significantly higher than two \cite[Sec.~4.7]{Doyle2002}, \cite{Rugh2002}. Hence, this represents a non ideal scenario for our derived results.
	
	The system \eqref{Eq:BilinearSys} is simulated using a fourth-order Runge-Kutta method with a step size of $1\times 10^{-4}$ [s]. At the input and output of the system \eqref{Eq:BilinearSys}, a zero-order hold with a sampling time of $1.5$ [s] is introduced in order to obtain sampled input-output data. 	 
	Additionally, uniform noise in the interval $[-0.05,0.05]$ (which corresponds to approximately $20\%$ of the maximum output signal amplitude) is added to the measured output. As in Example~1, the data is generated using a superposition of discrete sinusoidal input signals. The system initial conditions for this process are $x_1(0)=1$ and $x_2(0)=0$. The observed input-output data is shown in Figure~\ref{Fig:ExcSig02}.
	
	To implement a data-driven output-tracking IMC for the system \eqref{Eq:BilinearSys}, its dynamics is approximated by the second-order DVS \eqref{Eq:VolterraEq02}, \eqref{Eq:OpDef}. Then the controller formula \eqref{Eq:ControlSys} is used. Thereby, the key design parameter is the dimension $M$, which determines the number of parameters of the second-order DVS, see \eqref{Eq:OpDef}. It follows from Definition~\ref{Def:PE} that there is an inherent trade-off between the magnitude of $M$ and the amount of required data needed to verify persistence of excitation. To evaluate the influence of $M$ on both the data requirements and the control performance, we pursue three IMC designs with $M=3$, $M=4$ and $M=5$.
	
	In the present case, we can verify that the generated input signal is persistently exciting for all our three choices of $M$. Hence, we proceed to the IMC design via \eqref{Eq:ControlSys}. The resulting tracking performance for zero initial conditions and a smooth reference signal is shown in Figure \ref{Fig:Track02}. The corresponding tracking errors are given in Figure \ref{Fig:TrackErr}. Since the bilinear system \eqref{Eq:BilinearSys} is not a second-order DVS, exact reference tracking is not achieved. However, all three controllers are capable of tracking the reference with acceptable accuracy, showing that the approach can be used even when the system does not exactly corresponds to a DVS. However, it is clear from Figure \ref{Fig:TrackErr} that the controller for $M=5$ has higher error peaks and exhibits oscillations. This is an indicative of overfitting and thus, it seems recommendable to use the controller with the lowest dimension. 
	
	To robustify the IMC against model uncertainties and external disturbances, it is conventional to add a filter in between the error signal and the system inverse \cite{Doyle2002}. Furthermore, to deal with non-minimum phase systems, an inner-outer factorization of the linear part is required. We are currently exploring, how to incorporate such approaches in a data-driven setting.


	\begin{figure}[t]
		\centering
		\includegraphics[width=0.4\textwidth]{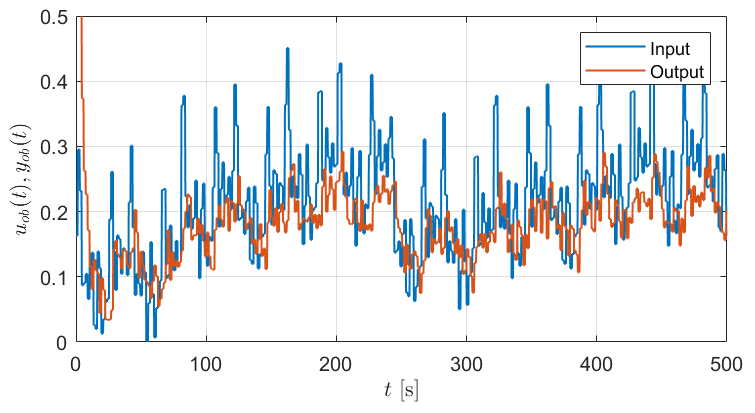}
		\caption{Example 2: Observed input-output data from the bilinear system \eqref{Eq:BilinearSys}.}\label{Fig:ExcSig02}
	\end{figure}
	
	\begin{figure}[t]
		\centering
		\includegraphics[width=0.4\textwidth]{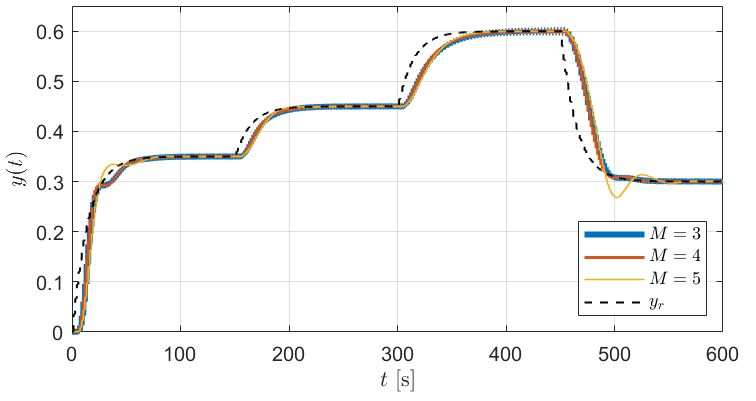}
		\caption{Example 2: Output reference tracking with the data-driven IMC \eqref{Eq:ControlSys} using a second-order DVS of dimension $M=3$, $M=4$ and $M=5$, respectively.}\label{Fig:Track02}
	\end{figure}
	
	
	\begin{figure}[t]
		\centering
		\includegraphics[width=0.4\textwidth]{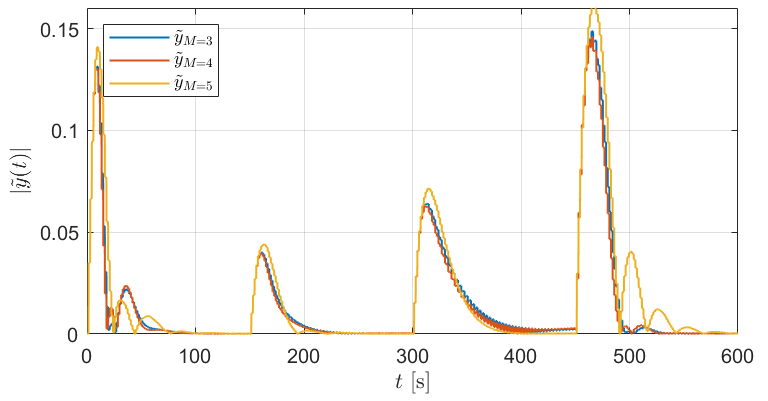}
		\caption{Example 2: Tracking error of the the data-driven IMC \eqref{Eq:ControlSys} using a second-order DVS of dimension $M=3$, $M=4$ and $M=5$, respectively.}\label{Fig:TrackErr}
	\end{figure}

\section{CONCLUSIONS}
	In this work, the fundamental result of Willems and collaborators on data-based representation of the input-output
	behavior of linear systems has been extended to an important class of nonlinear systems, namely second-order DVSs. 
	That is, we have provided an explicit data-dependent representation of a second-order DVS requiring only finite, but rich enough, data. 
	
	By using this new characterization, a \emph{data-driven internal model control} for output-tracking of this class of systems has been proposed. Compared to the conventional two-step ``system identification/control design" procedure, we provide a single formula, in which the controller itself is directly parametrized through data. 
	The performance and robustness of this approach have been illustrated by controlling a bilinear system with noisy measurements.
	
	The methodology can be extended to DVSs of arbitrary order, work that is under development. Additionally, it is under investigation how to address non-minimum phase systems and how to attenuate model mismatches, two problems that frequently appear when the methodology is applied to nonlinear systems that do not correspond exactly to DVSs.




\section*{APPENDIX}

	\begin{figure*}[t]
		\begin{small}
			\begin{align}
			\begin{split}
			D_1&=\left(I_{T}-(M^2_{ob,\{0,1,t\}})^{\dagger}M^2_{ob,\{0,1,T\}}\right)M^{\dagger}_{ob,\{0,1,T\}}\left(I_{M+1}-M_{ob,\{0,1,T\}}(M^2_{ob,\{0,1,T\}})^{\dagger}M^2_{ob,\{0,1,T\}}M^{\dagger}_{ob,\{0,1,T\}}\right)^{-1},\\
			D_2&=\left(I_{T}-M^{\dagger}_{ob,\{0,1,t\}})M_{ob,\{0,1,T\}}\right)(M^2_{ob,\{0,1,T\}})^{\dagger}\left(I_{\frac{M+1}{2}(M+2)}-M^2_{ob,\{0,1,T\}}M^{\dagger}_{ob,\{0,1,T\}}M_{ob,\{0,1,T\}}(M^2_{ob,\{0,1,T\}})^{\dagger}\right)^{-1}.
			\end{split}\label{Eq:D1D2}
			\end{align}
		\end{small}
	\end{figure*}

	\subsection{Proof of Theorem \ref{Theo:Representation}}
		\textbf{Part i)} Define $\sigma_1=L(M+1)$ and $\sigma_2=\frac{1}{2}L(M+1)(M+2)$. Any $L$-long sequences $\mu_{[0,L]}$, $\mu^2_{[0,L]}$ and $y_{[0,L]}$ corresponding to the system \eqref{Eq:VolterraEq02} can be represented as
		\begin{align}
		\left[\begin{array}{c}\mu_{[0,L]}\\ \mu^2_{[0,L]}\\y_{[0,L]}\end{array}\right]=\left[\begin{array}{c c}\mathbf{I}_{\sigma_1} & 0_{\sigma_1\times\sigma_2}\\ 0_{\sigma_2\times\sigma_1} & \mathbf{I}_{\sigma_2}\\\mathbf{I}_{L}\otimes\theta^\top_1 & \mathbf{I}_{L}\otimes\theta^\top_2\end{array}\right]\left[\begin{array}{c}\mu_{[0,L]}\\ \mu^2_{[0,L]}\end{array}\right], \label{Eq:DBD01}
		\end{align}
		with $\theta_1$ and $\theta_2$ as in \eqref{Eq:OpDef}. If $u_{ob,[-M,T]}$ is persistently exciting of order $L$, i.e., the rank condition in Definition \ref{Def:PE} holds, then it follows that
		\begin{align}
		\mathrm{rank}\big(&\mathcal{M}_{ob,\{0,L,T-L+1\}}\big)=\nonumber\\
		&\mathrm{rank}\left(\left[\begin{array}{c | c}\mathcal{M}_{ob,\{0,L,T-L+1\}} & \begin{bmatrix} \mu_{[0,L]}\\ \mu^2_{[0,L]}\end{bmatrix}\end{array}\right]\right)\nonumber\\
		&=\frac{1}{2}L\big(M+1\big)\big(M+4\big).\label{Eq:DBD01rank}
		\end{align}
		Since the rank does not change when the sequences of $\mu_{[0,L]}$ and $\mu^2_{[0,L]}$ are incorporated into $\mathcal{M}_{ob,\{0,1,T\}}$, there exists a $g\in\mathbb{R}^{T-L+1}$ such that
		\begin{align}
		\left[\begin{array}{c}\mu_{[0,L]}\\ \mu^2_{[0,L]}\end{array}\right]=\mathcal{M}_{ob,\{0,L,T-L+1\}}g.\label{Eq:DBD02}
		\end{align}
		By replacing \eqref{Eq:DBD02} in \eqref{Eq:DBD01}, one obtains
		\begin{align}
		\left[\begin{array}{c}\mu_{[0,L]}\\ \mu^2_{[0,L]}\\y_{[0,L]}\end{array}\right]&=\left[\begin{array}{c c}\mathbf{I}_{\sigma_1} & 0_{\sigma_1\times\sigma_2}\\ 0_{\sigma_2\times\sigma_1} & \mathbf{I}_{\sigma_2}\\\mathbf{I}_{L}\otimes\theta^\top_1 & \mathbf{I}_{L}\otimes\theta^\top_2\end{array}\right]\mathcal{M}_{ob,\{0,L,T-L+1\}}g \label{Eq:DBD03}\\ 
		&=\left[\begin{array}{c}M_{ob,\{0,L,T-L+1\}} \\ M^2_{ob,\{0,L,T-L+1\}}\\Y_{ob,\{0,L,T-L+1\}}\end{array}\right]g. \label{Eq:DBD04}
		\end{align}	 
		In this manner, the result of the first part is recovered. 
		
		The necessity stems from the fact that if $\mathcal{M}_{ob,\{0,L,T-L+1\}}$ does not have full-row rank, the existence of $g$ cannot be ensured in general since the rank of \eqref{Eq:DBD01rank} may change.
		
		\textbf{Part ii)} From \eqref{Eq:DBD01}, one has
		\begin{align}
		\left[\begin{array}{c}\mu_{[0,L]}\\ \mu^2_{[0,L]}\\y_{[0,L]}\end{array}\right]&=\left[\begin{array}{c}M_{ob,\{0,L,T-L+1\}} \\ M^2_{ob,\{0,L,T-L+1\}}\\Y_{ob,\{0,L,T-L+1\}}\end{array}\right]g\\
		&=\left[\begin{array}{c c}\mathbf{I}_{\sigma_1} & 0_{\sigma_1\times\sigma_2}\\ 0_{\sigma_2\times\sigma_1} & \mathbf{I}_{\sigma_2}\\\mathbf{I}_{L}\otimes\theta^\top_1 & \mathbf{I}_{L}\otimes\theta^\top_2\end{array}\right]\mathcal{M}_{ob,\{0,L,T-L+1\}}g.
		\end{align}
		By direct substitution of $g$ as defined in \eqref{Eq:Gdefinition}, it follows that
		\begin{align}
		\left[\begin{array}{c}\mu_{[0,L]}\\ \mu^2_{[0,L]}\\y_{[0,L]}\end{array}\right]&=\left[\begin{array}{c c}\mathbf{I}_{\sigma_1} & 0_{\sigma_1\times\sigma_2}\\ 0_{\sigma_2\times\sigma_1} & \mathbf{I}_{\sigma_2}\\\mathbf{I}_{L}\otimes\theta^\top_1 & \mathbf{I}_{L}\otimes\theta^\top_2\end{array}\right]\begin{bmatrix}\nu_{[0,L-1]}\\ \nu^2_{[0,L-1]}\end{bmatrix},
		\end{align}	
		thus, corroborating that the trajectory \eqref{Eq:SeqGen} corresponds to the system \eqref{Eq:VolterraEq02}. The necessity follows from the fact that if $\mathcal{M}_{ob,\{0,L,T-L+1\}}$ does not have full-row rank, then $\mathcal{M}_{ob,\{0,L,T-L+1\}}\mathcal{M}^{\dagger}_{ob,\{0,L,T-L+1\}}\neq \mathbf{I}_{L(\sigma_1+\sigma_2)}$. Therefore, in general, the trajectory \eqref{Eq:SeqGen} will no correspond to the system \eqref{Eq:VolterraEq02}.
		
	\subsection{Proof of Corollary \ref{Cor:DataRepresentation}}
		If the observed input sequence $u_{ob,[-M,T]}$ is persistently exciting of order one with respect to the system \eqref{Eq:VolterraEq02}, $\mathcal{M}_{ob,\{0,1,T\}}$ has full-row rank. Then Theorem \ref{Theo:Representation} part i) implies that there exists a $g\in\mathbb{R}^{T},$ such that
		\begin{align*}
		y(k)=Y_{ob,\{0,1,T\}}g.
		\end{align*}
		The proof is completed by choosing $g$ as in Theorem \ref{Theo:Representation} part ii) with $\nu_{[0,1]}=\mu(k)$, which yields \eqref{Eq:VolterraData01}.
		
	\subsection{Proof of Corollary \ref{Cor:DataRepresentation2}}
		Define $\sigma_1=M+1$ and $\sigma_2=\frac{1}{2}(M+1)(M+2)$. One has that
		\begin{align}
		\mathcal{M}_{ob,\{0,1,T\}}\mathcal{M}^{\dagger}_{ob,\{0,1,T\}}&=\left[\begin{array}{c}M_{ob,\{0,1,T\}}\\ M^2_{ob,\{0,1,T\}}\end{array}\right]\mathcal{M}^{\dagger}_{ob,\{0,1,T\}}\nonumber\\
		&=\mathbf{I}_{\sigma_1+\sigma_2}. \label{minv}
		\end{align}
		Let $\mathcal{M}^{\dagger}_{ob,\{0,1,T\}}$ be split in two matrices $D_1$ and $D_2$, i.e., $\mathcal{M}^{\dagger}_{ob,\{0,1,T\}}=[D_{1} \; D_{2}]$. Then, \eqref{minv} can be written equivalently as
		\begin{align*}
		\left[\begin{array}{c}M_{ob,\{0,1,T\}}\\ M^2_{ob,\{0,1,T\}}\end{array}\right]&\left[\begin{array}{c c}D_1 & D_2\end{array}\right]=\nonumber\\
		=&\left[\begin{array}{c c}M_{ob,\{0,1,T\}}D_1 & M_{ob,\{0,1,T\}}D_2\\ M^2_{ob,\{0,1,T\}}D_1 & M^2_{ob,\{0,1,T\}}D_2\end{array}\right]\\
		=&\left[\begin{array}{c c}\mathbf{I}_{\sigma_1} & 0_{\sigma_1\times\sigma_2}\\ 0_{\sigma_2\times\sigma_1} & \mathbf{I}_{\sigma_2}\end{array}\right].
		\end{align*}
		Solving for $D_1$ and $D_2$ yields \eqref{Eq:D1D2}.
		
		By substituting the relation $\mathcal{M}^{\dagger}_{ob,\{0,1,T\}}=[D_1\;D_2]$ in \eqref{Eq:VolterraData01}, we obtain
		\begin{align*}
		y(k)&=Y_{ob,\{0,1,T\}}D_1\mu(k)+Y_{ob,\{0,1,T\}}D_2\mu^2(k).
		\end{align*}
		From the last expression, $P_1$ and $P_2$ in \eqref{Eq:OpDataBased} follow, completing the proof.
		
	\subsection{Proof of Lemma~\ref{Lem:Inverse}}
		From \eqref{Eq:VolterraEq02} it follows that
		\begin{align*}
		y_1(k)=\theta_{1,1}u(k)+\theta_{1,2}u(k-1)+\cdots+\theta_{1,M+1}u(k-M),
		\end{align*}
		where $\theta_{1,i}$ denotes the $i$-th element of $\theta_1$. In order to represent the input $u(k)$ as a function of the output $y(k)$, the relation above can be reorganized as
		\begin{align*}
		u(k)&=\frac{1}{\theta_{1,1}}y_1(k)-\frac{\theta_{1,2}}{\theta_{1,1}}u(k-1)\cdots-\frac{\theta_{1,M+1}}{\theta_{1,1}}u(k-M).
		\end{align*}
		Consider the following short-hands:
		\begin{align*}
		\bar{\theta}=\left[\begin{array}{c c c c}-\frac{\theta_{1,2}}{\theta_{1,1}} & -\frac{\theta_{1,3}}{\theta_{1,1}} & \cdots & -\frac{\theta_{1,M+1}}{\theta_{1,1}}\end{array}\right]^\top, && d=\frac{1}{\theta_{1,1}}.
		\end{align*}
		Hence, the inverse system is described by the linear system
		\begin{align}	
		\begin{split}
		\chi(k)&=\left[\begin{array}{c c c c}u(k-1) & u(k-2) & \cdots & u(k-M)\end{array}\right]\!^\top,\!\\
		u(k)&=\bar{\theta}^\top\chi(k)+d\,y_1(k).
		\end{split}\label{Eq:P1in01}
		\end{align} 
		
		It follows that any $L$-long input-output sequence of \eqref{Eq:P1in01} can be represented as 
		\begin{align}
		\left[\begin{array}{c}y_{1,[0,L]}\\ u_{[0,L]}\end{array}\right]=\left[\begin{array}{c c} \mathbf{I}_{L} & 0_{L\times M}\\ d\,\mathbf{I}_{L} & \mathbf{I}_{L}\otimes\bar{\theta}^\top\end{array}\right]\,\left[\begin{array}{c}y_{1,[0,L]}\\ \chi_{[0,L]}\end{array}\right].\label{Eq:PinvRep00}
		\end{align}
		Since $y_{1ob,[0,T]}$ is persistently exciting, there exists $g\in\mathbb{R}^T$ such that 
		\begin{align}
		\left[\begin{array}{c}y_{1,[0,L]}\\ \chi_{[0,L]}\end{array}\right]&=\left[\begin{array}{c}Y_{1ob,\{0,L,T-L+1\}}\\ X_{ob,\{0,L,T-L+1\}}\end{array}\right]g.\label{Eq:PinvRep01}
		\end{align}
		A particular $g$ can be find using the right inverse.
		By replacing \eqref{Eq:PinvRep01} and \eqref{Eq:PinvRep02} in \eqref{Eq:PinvRep00}, we obtain
		\begin{align*}
		&\left[\begin{array}{c}y_{1,[0,L]}\\ u_{[0,L]}\end{array}\right]=\\
		&\;\left[\begin{array}{c}Y_{1ob,\{0,L,T-L+1\}}\\ U_{ob,\{0,L,T-L+1\}}\end{array}\right]\,\left[\begin{array}{c}Y_{1ob,\{0,L,T-L+1\}}\\ X_{ob,\{0,L,T-L+1\}}\end{array}\right]^{\dagger}\left[\begin{array}{c}y_{1,[0,L]}\\ \chi_{[0,L]}\end{array}\right].
		\end{align*}
		The expression \eqref{Eq:InverseSys} follows by setting $L=1$ above.


\bibliographystyle{IEEEtran}
\bibliography{IEEEabrv,ref_2}

\end{document}